	\def\MR#1{}
\newcommand{\RR}{\mathbb{R}}
\newcommand{\kk}{\mathbb{k}}
\newcommand{\rk}{\text{\rm rk}}
\newcommand{\sP}{\mathscr{P}}
\newcommand{\sM}{\mathscr{M}}
\newcommand{\CC}{\mathbb{C}}
\newcommand{\NN}{\normalfont\mathbb{N}}
\newcommand{\ZZ}{\mathbb{Z}}
\newcommand{\cave}{\text{\rm cave}}
\newcommand{\Snapp}{\text{\rm Snapp}}
\newcommand{\PP}{{\normalfont\mathbb{P}}}
\newcommand{\xx}{{\normalfont\mathbf{x}}}
\newcommand{\QQ}{\mathbb{Q}}
\newcommand{\pp}{{\normalfont\mathfrak{p}}}
\newcommand{\bn}{{\normalfont\mathbf{n}}}
\newcommand{\bm}{{\normalfont\mathbf{m}}}
\newcommand{\supp}{{\normalfont\text{supp}}}
\newcommand{\musupp}{\mu{\text{\rm-supp}}}
\newcommand{\bx}{{\normalfont\mathbf{x}}}
\newcommand{\ttt}{{\normalfont\mathbf{t}}}
\newcommand{\Supp}{\normalfont\text{Supp}}
\newcommand{\ee}{{\normalfont\mathbf{e}}}
\newcommand{\Hom}{\normalfont\text{Hom}}
\newcommand{\OO}{\mathcal{O}}
\newcommand{\C}{\mathscr{C}}
\newcommand{\FF}{\mathbb{F}}
\newcommand{\multProj}{\normalfont\text{MultiProj}}
\def\f0{\mathbf{0}}
\def\fv{\mathbf{v}}
\def\bb{\mathbf{b}}
\def\1{\mathbf{1}}
\def\bw{\mathbf{w}}
\def\bv{\mathbf{v}}
\def\bx{\mathbf{x}}
\def\bb{\mathbf{b}}
\newtheorem{headthm}{Theorem}
\newaliascnt{headcor}{headthm}
\newaliascnt{headconj}{headthm}
\newaliascnt{corollary}{theorem}
\newaliascnt{claim}{theorem}
\newaliascnt{lemma}{theorem}
\newtheorem{lemma}[lemma]{Lemma}
\newaliascnt{conjecture}{theorem}
\newtheorem{conjecture}[conjecture]{Conjecture}
\newaliascnt{proposition}{theorem}
\newtheorem{proposition}[proposition]{Proposition}
\theoremstyle{definition}
\newaliascnt{definition}{theorem}
\newtheorem{definition}[definition]{Definition}
\newaliascnt{notation}{theorem}
\newaliascnt{example}{theorem}
\newtheorem{example}[example]{Example}
\newaliascnt{examples}{theorem}
\newaliascnt{remark}{theorem}
\newtheorem{remark}[remark]{Remark}
\newaliascnt{question}{theorem}
\newaliascnt{questions}{theorem}
\newaliascnt{problem}{theorem}
\newaliascnt{construction}{theorem}
\newaliascnt{setup}{theorem}
\newaliascnt{algorithm}{theorem}
\newaliascnt{observation}{theorem}
\newaliascnt{defprop}{theorem}
\newaliascnt{fact}{theorem}
\DeclareFontFamily{OT1}{pzc}{}
\DeclareFontShape{OT1}{pzc}{m}{it}{<-> s * [1.100] pzcmi7t}{}
\DeclareMathAlphabet{\mathchanc}{OT1}{pzc}{m}{it}
\def\equationautorefname~#1\null{(#1)\null}
\def\sectionautorefname~#1\null{Section #1\null}
\def\subsectionautorefname~#1\null{\S #1\null}
\begin{document}

	\title{Syzygies of polymatroidal ideals}

	\author{Yairon Cid-Ruiz}
	\address{Department of Mathematics, North Carolina State University, Raleigh, NC 27695, USA}
	\email{ycidrui@ncsu.edu}
	
	\author{Jacob P. Matherne}
	\address{Department of Mathematics, North Carolina State University, Raleigh, NC 27695, USA}
	\email{jpmather@ncsu.edu}
	
	\author{Anna Shapiro}
	\address{Department of Mathematics, North Carolina State University, Raleigh, NC 27695, USA}
	\email{arshapi4@ncsu.edu}
	
	

	\begin{abstract}
		We introduce the cave polynomial of a polymatroid and show that it yields a valuative function on polymatroids. 
		The support of this polynomial after homogenization is again a polymatroid.
		The cave polynomial gives a $K$-theoretic description of a polymatroid in the augmented $K$-ring of a multisymmetric lift.  
        As applications, we settle two conjectures: one by Bandari, Bayati, and Herzog regarding polymatroidal ideals, and another by Castillo, Cid-Ruiz, Mohammadi, and Monta\~no regarding the M\"obius support of a polymatroid.
	\end{abstract}	
	
	\maketitle

	\section{Introduction}
	
	A \emph{polymatroid} $\sP$ on the set $[p] = \{1,\ldots,p\}$ with cage $\bm = (m_1,\ldots,m_p) \in \NN^p$ is given by a function $\rk_\sP \colon 2^{[p]} \rightarrow \NN$ satisfying the following properties:
	\begin{enumerate}[\rm (i)]
		\item (Normalization)\; $\rk_\sP\left(\varnothing\right)=0$.
		\item (Monotonicity)\; $\rk_\sP\left(J_1\right) \le \rk_\sP\left(J_2\right)$ if $J_1 \subseteq J_2 \subseteq [p]$.
		\item (Submodularity)\; $\rk_\sP\left(J_1 \cap J_2\right) + \rk_\sP\left(J_1 \cup J_2\right) \le \rk_\sP\left(J_1\right) + \rk_\sP\left(J_2\right)$ for all $J_1,J_2 \subseteq [p]$.
		\item (Cage)\; $\rk_\sP\left(\{i\}\right) \le m_i$ for all $i \in [p]$.
	\end{enumerate}
	We say that $\rk_\sP \colon 2^{[p]} \rightarrow \NN$ is the \emph{rank function} of $\sP$ and that the \emph{rank} of $\sP$ is given by $\rk(\sP) = \rk_\sP([p])$.
	A polymatroid with cage $\bm = (1,\ldots,1)$ is called a \emph{matroid}.
	
	\smallskip
	
	Let $R = \kk[x_1,\ldots,x_p]$ be a polynomial ring over a field $\kk$. 
	Let $\sP$ be a polymatroid on the set $[p]$ with cage $\bm \in \NN^p$.
	The \emph{polymatroidal ideal} $I_\sP \subset R$ of $\sP$ is the monomial ideal generated by the monomials corresponding to the lattice points in the base polytope $B(\sP)$ of $\sP$.
	For each $i \ge 0$, the $i$-th \emph{homological shift ideal} ${\rm HS}_i(I_\sP) \subset R$ of $I_\sP$ is the monomial ideal generated by the monomials corresponding to the shifts in the $i$-th position of the minimal free $R$-resolution of $I_\sP$. 
	
	Let $I(\sP)$ be the independence polytope of $\sP$.
	The \emph{M\"obius function} $\mu_\sP \colon \ZZ^p \rightarrow \ZZ$ of the polymatroid $\sP$ is defined inductively by setting $\mu_\sP(\bn) = 1$ if $\bn \in B(\sP)$ and 
	$$
	\mu_\sP(\bn) \;=\; 1 - \sum_{\bw \,\in\, \left(\bn + \ZZ_{>0}^p\right)\cap  I(\sP)} \mu_\sP(\bw)		
	$$
	if $\bn \in I(\sP) \setminus B(\sP)$.
    For all $\bn \in \ZZ^p \setminus I(\sP)$, we set $\mu_\sP(\bn) = 0$.
	The \emph{M\"obius support} of $\sP$ is defined as $\musupp(\sP) = \lbrace \bn \in \NN^p \mid \mu_\sP(\bn) \neq 0 \rbrace$.
    
    The main goal of this paper is to settle the following two conjectures regarding polymatroids. 
	
	\begin{conjecture}[{Bandari -- Bayati -- Herzog \cite{bayati2018multigraded,herzog2021homological}}]
		\label{conj1}
		All the homological shift ideals ${\rm HS}_i(I_\sP)$ of $I_\sP$ are again polymatroidal ideals. 
	\end{conjecture}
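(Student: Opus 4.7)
The plan is to connect the homological shift ideals $\HS_i(I_\sP)$ to the cave polynomial introduced in the paper, and then conclude via the theorem that the homogenized support of the cave polynomial is a polymatroid. The central idea is that the multigraded Betti numbers of $I_\sP$ should be expressible as signed sums that fit inside a valuative framework on polymatroids.

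First, I would fix a suitable combinatorial model for the minimal multigraded free resolution of $I_\sP$. Polymatroidal ideals admit linear resolutions, and the multigraded Betti numbers $\beta_{i,\bn}(I_\sP)$ can be extracted either via Koszul homology or a cellular/lcm-type construction. My goal would be to express the exponent vectors appearing as shifts at homological position $i$, i.e.\ the set $\{\bn \in \NN^p : \beta_{i,\bn}(I_\sP) \ne 0\}$, as the support of a concrete polynomial in $p$ variables depending valuatively on $\sP$.

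Next, I would use the $K$-theoretic description of $\sP$ in the augmented $K$-ring of a multisymmetric lift to convert the Betti-number computation into an Euler-characteristic computation on that $K$-ring. Because the cave polynomial is valuative on polymatroids, each graded piece picked out by homological position $i$ should itself be expressible as a valuative combination of cave polynomials of lifts of $\sP$. After homogenization, the positivity/support statement from the main theorem applies graded-piece by graded-piece and identifies the support of $\HS_i(I_\sP)$ with the lattice points of a base polytope of a polymatroid. Since $I_\sP$ has a linear resolution, all generators of $\HS_i(I_\sP)$ live in a single degree, so it suffices to verify that this support is the set of bases of a polymatroid rather than just contained in such a set.

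The main obstacle will be the first step: producing the exact identification between multigraded Betti numbers of $I_\sP$ and a specific graded/coefficient extraction from the cave polynomial, in a way that respects the valuative structure. Once this dictionary is set up, the polymatroidality of $\HS_i(I_\sP)$ reduces to the statement about supports of homogenized cave polynomials, which is already guaranteed. A secondary subtlety is that $\HS_i(I_\sP)$ is defined via a minimal resolution, so one must argue that the candidate valuative expression records precisely the minimal shifts (no cancellation is missed); this can be handled by verifying the coefficients are nonnegative in the homogenized cave polynomial, which is built into the support statement.
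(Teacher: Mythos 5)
Your high-level strategy matches the paper's: you correctly identify that the linear resolution of $I_\sP$ means the shifts at homological position $i$ live in internal degree $\rk(\sP)+i$, so one only needs to control the $K$-polynomial (the alternating sum of shifts) and show its support, restricted to each total degree, is the set of bases of a polymatroid; and you correctly anticipate that this reduces to the statement that the (homogenized) support of the cave polynomial is a polymatroid, which is Theorem~A(i). That is exactly the paper's route.

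However, there are two genuine issues in the middle of your argument. First, the precise dictionary you flag as ``the main obstacle'' is not just a detail — it requires a dualization you never introduce. The $K$-polynomial of $I_\sP$ is \emph{not} governed by $\cave_\sP$ but by the cave polynomial of the \emph{dual} polymatroid $\sP^\vee = \bm - \sP$, via the identity $\mathcal{K}(I_\sP;\ttt) = \ttt^\bm\,\cave_{\sP^\vee}(\ttt^{-1})$. The paper establishes this (\autoref{prop_equiv} and \autoref{rem_formula_cave}) by passing to the Stanley--Reisner ideal $J_\sP\subset S$ of the polymatroidal multiprojective variety $Y_\sP$, taking its Alexander dual $K_\sP$, and recognizing $K_\sP$ as the polarization of $I_{\sP^\vee}$. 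Without that specific chain (Möbius function $\leftrightarrow$ $K$-class of $Y_\sP$ $\leftrightarrow$ $K$-polynomial of $J_\sP$ $\leftrightarrow$ $K$-polynomial of $I_{\sP^\vee}$), the needed polynomial ``depending valuatively on $\sP$'' is not produced; your appeal to ``Koszul homology or a cellular/lcm-type construction'' does not supply it. Second, your proposed resolution of the cancellation subtlety is incorrect: the coefficients of the cave polynomial (equivalently of $\mathcal{K}(I_\sP;\ttt)$) are \emph{not} nonnegative — they are the Möbius values $\mu_{\sP^\vee}(\bm-\bn)$ and alternate in sign with total degree, as the paper's closing example shows explicitly. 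What actually rules out cancellation is the linearity of the resolution: for a fixed exponent $\bn$, at most one homological index $i$ (namely $i = |\bn|-\rk(\sP)$) contributes to the alternating sum $\sum_i (-1)^i\beta_{i,\bn}$, so the support of the $K$-polynomial at each degree records the minimal shifts exactly. Finally, the phrase ``valuative combination of cave polynomials of lifts of $\sP$'' does not correspond to anything in the argument; valuativity (\autoref{prop_valuative_cave}) is used only to reduce the proof that the support is a generalized polymatroid to the realizable case, where one can invoke multiplicity-free varieties and Bertini, not to decompose $\HS_i$ homological-degree by homological-degree.
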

	
	\begin{conjecture}[{Castillo -- Cid-Ruiz -- Mohammadi -- Monta\~no \cite{K_POLY_MULT_FREE}}]
		\label{conj2}
		The M\"obius support of $\sP$ is a generalized polymatroid {\rm(}i.e., a homogenization of it yields a polymatroid{\rm)}.
	\end{conjecture}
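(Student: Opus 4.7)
The plan is to prove \autoref{conj2} by packaging $\mu_\sP$ into a generating polynomial whose support can be read off from a K-theoretic model of $\sP$.

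First, I would introduce $f_\sP(\ttt) = \sum_{\bn \in \NN^p} \mu_\sP(\bn)\ttt^\bn$ and exploit the M\"obius recurrence to obtain a closed form for $f_\sP$. Writing $M(\bn) = \sum_{\bw \rhd \bn} \mu_\sP(\bw)$ with $\rhd$ denoting strict componentwise inequality, the relation $\mu_\sP(\bn) + M(\bn) = \mathbf{1}_{\bn \in I(\sP)}$ combined with the elementary identity $(1 - t_1)\cdots(1 - t_p)\sum_\bn M(\bn)\ttt^\bn = (-1)^p f_\sP(\ttt)$ expresses $f_\sP$ as an alternating sum of shifts of the generating series of $I(\sP) \cap \NN^p$. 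This identifies $f_\sP$ with a Hilbert-series-like invariant which should coincide with (a normalization of) the cave polynomial announced in the abstract.

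Second, the closed form would let me prove that $f_\sP$ is valuative on polymatroids, reducing to the classical valuativity of Hilbert series of lattice-point sets of polytopes, together with compatibility of subdivisions of $B(\sP)$ with those of $I(\sP)$. Valuativity is the key reduction: it brings the support question down to a generating family of polymatroids (such as nested or Schubert-like polymatroids), where \autoref{conj2} can be verified directly.

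Third, to globalize the support statement, I would realize $f_\sP$ as the K-class of $\sP$ inside the augmented K-ring of a multisymmetric lift---a matroid $\widetilde{\sP}$ on $\coprod_i [m_i]$ whose symmetric quotient recovers $\sP$. On this ground set, K-classes of matroids are known to have supports that, after homogenization, form polymatroid base polytopes, and pulling back along the quotient yields the desired generalized polymatroid structure on $\musupp(\sP)$. The main obstacle is this K-theoretic identification: constructing the multisymmetric lift so that $[\widetilde{\sP}]$ faithfully tracks the M\"obius recurrence, and ensuring that the valuative normalization descends correctly through the lift. This is delicate because $\mu_\sP$ depends on the full polytope $I(\sP)$, not just $B(\sP)$, and polymatroidal subdivisions of the base polytope need not respect the passage to the independence polytope.
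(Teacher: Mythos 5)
Your overall strategy shares two real ingredients with the paper's proof---the Möbius generating polynomial (which the paper calls the cave polynomial) and valuativity---but the way you propose to \emph{use} valuativity does not work, and this is a genuine gap, not just a presentation issue.

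You say valuativity ``brings the support question down to a generating family of polymatroids, where \autoref{conj2} can be verified directly.'' This is where the plan fails. Valuativity means that if $\sum_i a_i \mathbb{1}_{\sP_i}=0$ then $\sum_i a_i f_{\sP_i}=0$; equivalently, identities in the valuative group propagate. This lets you transport \emph{equalities} of valuative functions from a generating family to all polymatroids, but ``$\supp(f_\sP)$ is a generalized polymatroid'' is not an equality of valuative functions, it is a structural property of one polynomial. If $\mathbb{1}_\sP=\sum_i a_i\mathbb{1}_{\sP_i}$, then $f_\sP=\sum_i a_i f_{\sP_i}$, and knowing each $\supp(f_{\sP_i})$ is a generalized polymatroid tells you nothing about $\supp(f_\sP)$: signed sums can cancel terms, and even without cancellation a union of (generalized) polymatroid bases need not be one. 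The paper avoids this trap by using valuativity only to prove numerical \emph{identities}---specifically the equality $c_\bn(Y_{\sP})=c_{\bn-\ee_i}(Y_{\sP-\ee_i})$ in \autoref{lem_cutting_polymat}, checked on realizable polymatroids via Bertini applied to a multiplicity-free degeneration. The support statement itself is then obtained by a different mechanism: the coefficient identities, together with \autoref{lem_truncation_polymat} and an induction on rank, show that $\supp(\cave_\sP)$ satisfies the combinatorial axioms of a ``cave'' in the sense of \cite{K_POLY_MULT_FREE}, and the paper then invokes the purely combinatorial result that every cave is a generalized polymatroid.

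Your third step has a similar soft spot. You hope to read off the support from a K-class of a multisymmetric lift $\widetilde{\sP}$ on $\coprod_i[m_i]$, appealing to known support results for matroid K-classes. But the lift lives in $|E|=\sum_i m_i$ variables while $\musupp(\sP)$ lives in $p$ variables; the passage to the symmetric quotient collapses variables and there is no general principle that the support of a matroid K-class descends to a polymatroid support under this quotient. The paper only uses the multisymmetric lift to \emph{interpret} the Snapper polynomial (i.e.\ to identify $\mathfrak{b}(\cave_\sP)$ with an Euler characteristic); it does not derive the support statement from it. To close the gap you would need something playing the role of the cave machinery, or a genuinely new argument that the relevant support survives both the quotient and the signed valuative decomposition.
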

	
\autoref{conj1} has been verified in the following cases: in \cite{bayati2018multigraded}, if $\sP$ is a matroid; in \cite{herzog2021homological}, if $\sP$ satisfies the strong exchange property; in \cite{ficarra2023dirac}, if $\sP$ has rank two; see also \cite{ficarra2022homological}.
In \cite[Theorem 7.17]{K_POLY_MULT_FREE}, the conclusion of \autoref{conj2} was proven in the case where $\sP$ is realizable, thus serving as motivation to state this conjecture.
By \cite[Theorem 7.19]{K_POLY_MULT_FREE} or \cite[Remark 3.5]{eur2023k}, we know that \autoref{conj2} holds when $\sP$ is a matroid. 

The $K$-ring of a matroid was recently introduced by Larson, Li, Payne, and Proudfoot \cite{larson2024k}.
Since the $K$-ring of a matroid has already become an object of interest, we are also interested in a $K$-theoretic description of the polymatroid $\sP$.
Let $\sM$ be a matroid on a ground set $E$ with subsets $\mathcal{S}_1,\ldots,\mathcal{S}_p \subseteq E$ such that the restriction polymatroid is $\sP$.
By considering the augmented $K$-ring of $\sM$, we say that the \emph{Snapper polynomial} of $\sP$ is given by
$$
\Snapp_\sP\left(t_1,\ldots,t_p\right) \;=\; \chi\left(\sM, \, \mathcal{L}_{\mathcal{S}_1}^{\otimes t_1}\otimes \cdots \otimes\mathcal{L}_{\mathcal{S}_p}^{\otimes t_p} \right).
$$
For more details, see \autoref{def_K_ring_mat}, \autoref{def_mult_lift}, and \autoref{def_Snapper}.

	\smallskip
	
	Motivated by the combinatorial notion of \emph{caves} introduced in \cite{K_POLY_MULT_FREE}, we introduce the \emph{cave polynomial} of a polymatroid. 
	The cave polynomial of $\sP$ is given by 
	$$
	\cave_\sP(t_1,\ldots,t_p) \;=\; \sum_{\bn \in \NN^p \text{ and } |\bn| = \rk(\sP)}  \mathbb{1}_\sP(\bn)\, \prod_{i=1}^{p-1} \left(1 - \max_{i<j}{\big\lbrace \mathbb{1}_\sP\left(\bn-\ee_i+\ee_j\right)\big\rbrace}t_i^{-1} \right) \ttt^\bn,
	$$
	where $\mathbb{1}_\sP$ denotes the indicator function of the base polytope $B(\sP)$ of $\sP$.
It turns out that the Snapper polynomial $\Snapp_\sP(t_1,\ldots,t_p)$ and the cave polynomial $\cave_\sP(t_1,\ldots,t_p)$ encode the same information. 
Indeed, we have the equality
$$
    \Snapp_\sP(t_1,\ldots,t_p) \;=\; \mathfrak{b} \big( \cave_\sP(t_1,\ldots,t_p)\big),
$$
where $\mathfrak{b} \colon \QQ[t_1,\ldots,t_p] \rightarrow \QQ[t_1,\ldots,t_p]$ is the $\QQ$-linear map sending $t_1^{n_1}\cdots t_p^{n_p}$ to $\binom{t_1+n_1}{n_1}\cdots \binom{t_p+n_p}{n_p}$ (see \autoref{eq_Snapp_cave}).    

Our goal is to investigate various aspects of the cave polynomial.
When $\sP$ is realizable, our approach is to consider the corresponding multiplicity-free variety (see \autoref{rem_flat_degen}).
To address the general case (where $\sP$ need not be realizable), our main idea is to show that the cave polynomial yields a \emph{valuative function} on polymatroids. 
The theorem below contains our main results. 
	
\begin{headthm}
    \label{thmA}
        \autoref{conj1} and \autoref{conj2} hold. 
        More precisely, we have:
 		\begin{enumerate}[\rm (i)]
			\item The support of the cave polynomial $\cave_\sP(t_1,\ldots,t_p)$ of $\sP$ is a generalized polymatroid. 
            \smallskip
            \item The cave polynomial $\cave_\sP(t_1,\ldots,t_p)$ of $\sP$ satisfies the equality
            $$
            \cave_\sP(t_1,\ldots,t_p) \;=\; \sum_{\bn \in  \NN^p} \, \mu_\sP(\bn) \, t_1^{n_1}\cdots t_p^{n_p}.
            $$
            In particular, \autoref{conj2} holds.
            \smallskip
            \item The $K$-polynomial of the polymatroidal ideal $I_\sP \subset R$ is given by 
            $$
            \mathcal{K}\left(I_\sP;t_1,\ldots,t_p\right) \;=\; t_1^{m_1}\cdots t_p^{m_p} \; \cave_{\sP^\vee}\left(t_1^{-1},\ldots,t_p^{-1}\right),
            $$
            where $\sP^\vee = \bm - \sP$ is the dual polymatroid with respect to the cage $\bm$.
            Thus the $i$-th homological shift ideal of $I_\sP$ is given by
            $$
            {\rm HS}_i\left(I_\sP\right) \;=\; \Big( x_1^{n_1}\cdots x_p^{n_p} \;\mid\; \bn  \in \NN^p, \; |\bn| = \rk(\sP)+i \text{\, and \,} \mu_{\sP^\vee}(\bm - \bn)\neq 0  \Big).
            $$
            In particular, \autoref{conj1} holds.
            \smallskip			
            \item The function $\sP \mapsto \cave_\sP(t_1,\ldots,t_p)$ assigning the cave polynomial to a polymatroid is valuative. 
	\end{enumerate}   
\end{headthm}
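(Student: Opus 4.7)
The plan is to prove the four parts in the order (iv), (ii), (i), (iii), using valuativity as the bridge from known matroid or realizable cases to arbitrary polymatroids. The first and most fundamental step is (iv), and for this the key input is the identity $\Snapp_\sP = \mathfrak{b}(\cave_\sP)$ stated in the introduction. The operator $\mathfrak{b}$ sends $\ttt^{\bn}$ to $\prod_{i}\binom{t_i+n_i}{n_i}$; since $\binom{t+n}{n}$ has leading term $t^n/n!$, $\mathfrak{b}$ is upper triangular in the monomial basis and hence a $\QQ$-linear automorphism of $\QQ[\ttt]$. It therefore suffices to show that $\sP\mapsto\Snapp_\sP$ is valuative. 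The $K$-theoretic definition of $\Snapp_\sP$ as the Euler characteristic of $\mathcal{L}_{\mathcal{S}_1}^{\otimes t_1}\otimes\cdots\otimes\mathcal{L}_{\mathcal{S}_p}^{\otimes t_p}$ on the augmented $K$-ring of the multisymmetric lift $\sM$ makes this natural: a polymatroid decomposition $B(\sP)=\bigcup B(\sP_i)$ should lift to an $S_{\bm}$-equivariant decomposition on the matroid level, and Euler characteristics of $K$-classes are additive on such decompositions.

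For Part (ii), the equality $\cave_\sM = \sum_{\bn} \mu_\sM(\bn)\,\ttt^{\bn}$ is known for matroids by \cite[Theorem~7.19]{K_POLY_MULT_FREE} and \cite[Remark~3.5]{eur2023k}. With (iv) in hand the left-hand side is valuative; the right-hand side is also valuative, either by a direct inductive argument using the recursive definition of $\mu_\sP$ on the independence polytope or by the same multisymmetric-lift strategy. Since matroids, together with the multisymmetric lift, generate the valuative group of polymatroids under $S_{\bm}$-symmetrization, the identity on matroids extends to all polymatroids, proving (ii). For Part (i), combine (ii) with \cite[Theorem~7.17]{K_POLY_MULT_FREE}: the support of $\cave_\sP$ equals $\musupp(\sP)$, which is a generalized polymatroid in the realizable case, and the general case follows by showing that the polymatroid M\"obius support of $\sP$ is the $S_{\bm}$-symmetrization of the matroid M\"obius support of $\sM$, a class which is preserved under symmetrization.

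For Part (iii), I would compute the $K$-polynomial $\mathcal{K}(I_\sP;\ttt)$ directly from the multigraded Hilbert series of $R/I_\sP$. The duality $\sP\leftrightarrow\sP^\vee$ under $\bn\leftrightarrow\bm-\bn$ translates at the level of monomial generating series into the prefactor $\ttt^{\bm}$ together with the substitution $t_i\mapsto t_i^{-1}$, converting the M\"obius-type series for $\sP^\vee$ provided by (ii) into the claimed formula for $\mathcal{K}(I_\sP;\ttt)$. Reading off the coefficients of $\ttt^{\bn}$ with $|\bn|=\rk(\sP)+i$ then identifies the generators of ${\rm HS}_i(I_\sP)$ with the lattice points $\bn$ such that $\mu_{\sP^\vee}(\bm-\bn)\neq 0$; by (i) applied to $\sP^\vee$, the total-degree slice of $\musupp(\sP^\vee)$ is a polymatroid basis, so ${\rm HS}_i(I_\sP)$ is polymatroidal, settling \autoref{conj1}. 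The main obstacle will be (iv), specifically verifying that polymatroid polytope decompositions lift equivariantly to the multisymmetric matroid $\sM$ and that the $K$-theory on the augmented $K$-ring is additive in the required $S_{\bm}$-equivariant sense; the $\max$ operations in the definition of $\cave_\sP$ mean that valuativity is not at all apparent from the combinatorial formula, which is why the $K$-theoretic detour through $\Snapp_\sP$ is essentially the only clean route.
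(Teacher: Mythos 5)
Your proposal takes a genuinely different route from the paper for the core parts, and it contains a gap that the paper's argument is specifically designed to avoid.

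For part (iv), you reduce valuativity of $\cave_\sP$ to valuativity of $\Snapp_\sP$ (correct, since $\mathfrak{b}$ is a $\QQ$-linear automorphism), and then propose to show $\Snapp_\sP$ is valuative by lifting a polymatroid decomposition $B(\sP)=\bigcup B(\sP_i)$ to an $\mathfrak{S}_{\bm}$-equivariant decomposition at the level of the multisymmetric lift $\sM$ and invoking additivity of Euler characteristics in the augmented $K$-ring. You flag this as the ``main obstacle,'' and you are right to: nothing in the paper (nor, to my knowledge, in \cite{eur2023k} or \cite{crowley2022bergman}) establishes that polymatroid subdivisions lift to matroid subdivisions of the multisymmetric lift, and this would be a nontrivial combinatorial claim. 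The paper sidesteps this entirely in \autoref{prop_valuative_cave}: it observes that the cave polynomial is determined (via \autoref{prop_equiv} and \autoref{rem_formula_cave}) by the $\NN^p$-graded Hilbert function of $I_\sP$, and that $\dim_\kk\left([I_\sP]_\bn\right)$ is exactly the indicator $i_{\bn+\RR_{\le 0}^p}(\sP)$, whose valuativity is \cite[Corollary~4.3]{ARDILA_FINK_RINCON}. This is shorter and uses only available results.

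For parts (ii) and (i), you propose to establish $\cave_\sP = \sum_\bn \mu_\sP(\bn)\ttt^\bn$ and the generalized-polymatroid property for $\musupp(\sP)$ by reducing to the matroid case via ``$\mathfrak{S}_{\bm}$-symmetrization.'' There are two issues. First, you assert that matroids together with the multisymmetric lift generate the valuative group of polymatroids; this is not the statement the paper uses (\autoref{rem_equal_valuative} cites instead that realizable polymatroids over $\CC$ generate the valuative group, via \cite{DERKSEN_FINK,EUR_LARSON_INT_POLY}), and I do not see a reference making your version precise. Second, and more seriously, ``is a generalized polymatroid'' is a qualitative property of a support set, not an element of an abelian group, so valuativity of $\cave_\sP$ does not by itself propagate the property from special cases to all $\sP$. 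You try to bridge this by asserting that $\musupp(\sP)$ is the symmetrization of $\musupp(\sM)$ and that this operation preserves generalized polymatroids, but neither claim is proved or cited. The paper instead proves (ii) directly for \emph{all} polymatroids via a shelling of the Stanley--Reisner complex $\Delta(J_\sP)$ and \cite[Proposition~4.6]{K_POLY_MULT_FREE} (see \autoref{rem_formula_cave}), with no valuativity needed; and it proves (i) by showing $\musupp(\sP)$ is a \emph{cave} in the sense of \cite[Definition~5.8]{K_POLY_MULT_FREE}, using \autoref{lem_truncation_polymat} (truncations of polymatroids are polymatroids, via Lorentzian polynomials) and \autoref{lem_cutting_polymat} (proved via the valuativity of $\cave$ together with a Bertini argument in the realizable case). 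It is only inside \autoref{lem_cutting_polymat} that valuativity is invoked, and there it is used to reduce an \emph{equality of integers} (not a qualitative property) to the realizable case, which is the correct use of \autoref{rem_equal_valuative}.

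Your treatment of (iii) is close in spirit to the paper's, modulo invoking \autoref{prop_equiv} for the $K$-polynomial formula and citing the fact that polymatroidal ideals have a linear resolution (\cite[Theorem~12.6.2]{HERZOG_HIBI}) so that ${\rm HS}_i(I_\sP)$ can be read off from the degree-$\left(\rk(\sP)+i\right)$ piece of the support of $\mathcal{K}(I_\sP;\ttt)$; you should make the linear-resolution step explicit.
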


\section{Proofs of our results}
	
	Let $\sP$ be a polymatroid on $[p] = \{1,\ldots,p\}$ with rank function $\rk_\sP \colon 2^{[p]} \rightarrow \ZZ$.
	Let $\bm = (m_1,\ldots,m_p) \in \NN^p$ be a cage for the polymatroid $\sP$.
	This means that 
	$$
	\rk_\sP(\{i\}) \;\le\; m_i \quad \text{ for all } \quad 1 \le i \le p.
	$$
	Let $\kk$ be a field and $R = \kk\left[x_1,\ldots,x_p\right]$ be a standard $\NN^p$-graded polynomial ring with $\deg(x_i) = \ee_i \in \NN^p$ for every $i$. 
	Let $
	S = \kk\left[x_{i,j} \mid 1 \le i \le p, \,0 \le j \le m_i \right]
	$ 
	be a standard $\NN^p$-graded polynomial ring with $\deg(x_{i,j}) = \ee_i \in \NN^p$ for every $i,j$. 
	We note that  
	$$
	\multProj(S) \;=\; \PP \;:=\; \PP_\kk^{m_1} \times_\kk \cdots \times_\kk \PP_\kk^{m_p}
	$$
	is the product of projective spaces associated to $S$.

	The \emph{base polytope} of the polymatroid $\sP$ is given by 
	$$
	B(\sP) \;:=\; \Big\lbrace
    \begin{array}{c|c}
    \fv = 
    (v_1,\ldots,v_p) \in \RR_{\ge 0}^p  \;\;&\;\; \sum_{i=1}^{p}v_i=\rk(\sP) \;\text{ and }\; \sum_{j \in J} v_j \le \rk(J)  \text{ for all } J \subseteq [p]
    \end{array}
     \Big\rbrace. 
	$$
	The \emph{independence polytope} of $\sP$ is defined as 
	$$
	I(\sP) \;:=\; \Big\lbrace 
    \begin{array}{c|c}
    \fv = (v_1,\ldots,v_p) \in \RR_{\ge 0}^p  \;\;&\;\;  \sum_{j \in J} v_j \le \rk(J)  \text{ for all } J \subseteq [p] 
    \end{array}
    \Big\rbrace. 
    $$
    We have the following equality 
    \begin{equation*}
    \label{eq_I_B}
    I(\sP) \;=\; \Big(B(\sP) \;+\; \RR_{\le 0}^p\Big) \;\cap\; \RR_{\ge 0}^p,
    \end{equation*}
    where $+$ denotes the Minkowski sum.

	Our two objects of interest are the following. 
	
	\begin{definition}
		\begin{enumerate}[\rm (i)]
			\item The \emph{polymatroidal ideal} $I_\sP \subset R$ of the polymatroid $\sP$ is the monomial ideal given by 
			$$
			I_\sP \;:=\; \left(\xx^\bn = x_1^{n_1}\cdots x_p^{n_p} \,\mid\, \bn \in B(\sP) \cap \NN^p \right).
			$$
		 \item The \emph{M\"obius function} $\mu_\sP \colon \ZZ^p \rightarrow \ZZ$ of the polymatroid $\sP$ is defined inductively by setting $\mu_\sP(\bn) := 1$ if $\bn \in B(\sP)$ and 
			$$
			\mu_\sP(\bn) \;:=\; 1 - \sum_{\bw \,\in\, \left(\bn + \ZZ_{>0}^p\right)\cap  I(\sP)} \mu_\sP(\bw)		
			$$
			if $\bn \in I(\sP) \setminus B(\sP)$.
            When $\bn \not\in I(\sP)$, we set $\mu_\sP(\bn) := 0$.
			Then the \emph{M\"obius support} of $\sP$ is defined as 
			$$
			\musupp(\sP) \;:=\; \big\lbrace \bn \in  \NN^p \,\mid\, \mu_\sP(\bn) \neq 0\big\rbrace.
			$$
		\end{enumerate}
	\end{definition}
	
	Consider the minimal $\ZZ^p$-graded free $R$-resolution 
	$$
	\FF_\bullet\colon \quad \cdots \rightarrow \;F_i = \bigoplus_{j = 1}^{\beta_i}R(-\bb_{i,j})\; \rightarrow \cdots \rightarrow \;F_0 \; \rightarrow \;I_\sP\; \rightarrow 0
	$$
	of $I_\sP$, where each $\bb_{i,j} = (b_{i,j,1},\ldots,b_{i,j,p}) \in \NN^p$.
	The $i$-th \emph{homological shift ideal} of $I_\sP$ is given by 
	$$
	{\rm HS}_i(I_\sP) \;:=\; \left( \xx^{\bb_{i,j}}  \mid 1 \le j \le \beta_i \right) \;\subset\; R.
	$$
	Notice that the equality ${\rm HS}_0(I_\sP) = I_\sP$ holds.
	
\begin{definition}    
    \label{def_K_poly}
    The \emph{$K$-polynomial} of $I_\sP$ is defined as 
	$$
	\mathcal{K}(I_\sP; t_1,\ldots,t_p) \;:=\; \sum_{i\ge 0}{(-1)}^i\sum_{j=1}^{\beta_i} \ttt^{\mathbf{b}_{i,j}} \;\in\; \ZZ[t_1,\ldots,t_p]
	$$ 
	(see \cite{miller2005combinatorial}, \cite{KNUTSON_MILLER_SCHUBERT}).
\end{definition}

\begin{remark}
	By an abuse of notation, we also denote by $\sP$ the associated base discrete polymatroid (i.e., the lattice points in $B(\sP) \cap \NN^p$).
	Being a base discrete polymatroid is equivalent to being an \emph{M-convex set} in the sense of Murota \cite{MUROTA}.  
\end{remark}
	
We shall need the following ``dual version'' of the aforementioned polymatroidal ideal.
	
\begin{definition}
		The \emph{dual polymatroidal ideal} $J_\sP \subset S$ of $\sP$ with respect to the cage $\bm$ is given by
		$$
		J_\sP \;:=\; \bigcap_{\bn \in B(\sP) \cap \NN^p} \pp_{\bm-\bn} \;=\; \bigcap_{\bn \in B(\sP) \cap \NN^p} \left(x_{i,j} \mid 1 \le i \le p \text{ and } 0 \le j < m_i-n_i\right).
		$$
		The \emph{polymatroidal multiprojective variety} of $\sP$ with respect to the cage $\bm=(m_1,\ldots,m_p)$ is given by 
		$$
		Y_\sP \;:=\; V\left(J_\sP\right) \;\subset\; \PP = \PP_\kk^{m_1}\times_{\kk} \cdots \times_{\kk} \PP_\kk^{m_p}.
		$$
	\end{definition}

\begin{remark}[$\kk$ infinite]
	\label{rem_flat_degen}
    Our motivation to consider the multiprojective variety $Y_\sP \subset \PP$ comes from the following algebro-geometric ideas that are available in the realizable case. 
    If $\sP$ is realizable (i.e., linear over $\kk$), then we can find a \emph{multiplicity-free} subvariety $X_\sP \subset \PP$ such that the support of its multidegrees is given by $\sP$ (see \cite[Proposition 7.15]{K_POLY_MULT_FREE}).
    Then a remarkable result of Brion \cite{BRION_MULT_FREE} yields a flat degeneration of $X_\sP$ to $Y_\sP$.
    This means that the multigraded generic initial ideal of the prime associated to $X_\sP$ is square-free and coincides with $J_\sP$ (see \cite[Theorem D]{CCRC}).
\end{remark}

	\begin{remark}
		We say that the support of a polynomial $f(t_1,\ldots,t_p) \in \RR[t_1,\ldots,t_p]$ is a \emph{generalized polymatroid} if the support of the homogeneous polynomial $t_0^{\deg(f)}f(\frac{t_1}{t_0},\ldots,\frac{t_p}{t_0}) \in \RR[t_0,t_1,\ldots,t_p]$ is a (base discrete) polymatroid.
	\end{remark}
	
	\begin{remark}
		When $\sP$ is a matroid, $J_\sP$ is the \emph{``matroid ideal''} studied in \cite{NPS}.
	\end{remark}
	
	\begin{remark}
        \label{rem_dual_polymat}
		The set $\sP^\vee := \bm - \sP = \lbrace  \bm -\bn \mid \bn  \in \sP\rbrace$ is also a polymatroid. 
        We call it the \emph{dual polymatroid} of $\sP$ with respect to the cage $\bm$.
		The rank function of the dual polymatroid $\sP^\vee$ is given by 
		$$
		\rk_{\sP^\vee}(J) \;:=\; \sum_{j \in J} m_j + \rk_\sP\left([p] \setminus J\right) - \rk_\sP([p]) \quad \text{ for all } J \subseteq [p] 
		$$
		(see \cite[§44.6f]{schrijver2003combinatorial}).
		Moreover, we have $\sP^{\vee\vee} = \sP$.
	\end{remark}
	
	\begin{remark}
	\label{rem_chow_groth}
	The Chow ring of $\PP$ and the Grothendieck ring of coherent sheaves on $\PP$ are given by 
		$$
		A^*(\PP) \;\cong\; \frac{\ZZ[t_1,\ldots,t_p]}{\left(t_1^{m_1+1},\ldots,t_p^{m_p+1}\right)} \quad \text{ and } \quad  K(\PP) \;\cong\; \frac{\ZZ[t_1,\ldots,t_p]}{\left((1-t_1)^{m_1+1},\ldots,(1-t_p)^{m_p+1}\right)}.
		$$
		For any coherent sheaf $\mathcal{F}$ on $\PP$, we can write 
		$$
		\big[\mathcal{F}\big] \;=\; \sum_{\bn \in \NN^p \text{ and } |\bn| \le \dim(\Supp(\mathcal{F}))}  c_\bn\left(\mathcal{F}\right) \, \left[\OO_{\PP_\kk^{n_1} \times_{\kk} \cdots \times_{\kk} \PP_\kk^{n_p}}\right] \;\in\; K(\PP).
		$$
		For any closed subscheme $X \subset \PP$, we set $c_\bn(X) := c_\bn(\OO_X)$.
		Since by construction $\dim(Y_\sP) = \rk(\sP)$, we can write the class $\left[\OO_{Y_\sP}\right] \in K(\PP)$ as
		$$
		\left[\OO_{Y_\sP}\right] \;=\; \sum_{\bn \in \NN^p \text{ and } |\bn| \le \rk(\sP)}  c_\bn\left(Y_\sP\right) \, \left[\OO_{\PP_\kk^{n_1} \times_{\kk} \cdots \times_{\kk} \PP_\kk^{n_p}}\right] \;\in\; K(\PP).
		$$
		Under the above isomorphism describing $K(\PP)$, we can also write
		$$
		\left[\OO_{Y_\sP}\right] \;=\; \sum_{\bn \in \NN^p \text{ and } |\bn| \le \rk(\sP)}  c_\bn\left(Y_\sP\right) \, (1-t_1)^{m_1-n_1}\cdots(1-t_p)^{m_p-n_p} \;\in\; K(\PP).
		$$
		Then we obtain 
		$$
		\left[Y_\sP\right] \;=\; \sum_{\bn \in \NN^p \text{ and } |\bn| = \rk(\sP)}  c_\bn\left(Y_\sP\right) \, t_1^{m_1-n_1}\cdots t_p^{m_p-n_p} \;\in\; A^*(\PP)
		$$
		(i.e., when $|\bn| = \dim(Y_\sP)$, the constants $c_\bn(Y_\sP) = \deg_\PP^\bn(Y_\sP)$ encode the multidegrees of $Y_\sP$ ).
	\end{remark}

	The next technical proposition relates the previous invariants we have seen. 
	
	\begin{proposition}
		\label{prop_equiv}
		Under the above notation, the following statements hold:
		\begin{enumerate}[\rm (i)]
			\item $\mu_\sP(\bn) = c_\bn(Y_\sP)$ for all $\bn \in \NN^p$.
			\item In terms of the dual polymatroid $\sP^\vee = \bm - \sP$, we have the equality
			$$
			\mathcal{K}\left(I_{\sP^\vee}; \ttt\right) \;=\;  \sum_{\bn \in \NN^p}  c_\bn(Y_\sP) \, t_1^{m_1-n_1} \cdots t_p^{m_p-n_p}.
			$$
		\end{enumerate}
	\end{proposition}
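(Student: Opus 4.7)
The plan is to derive both parts from a single inclusion--exclusion identity applied to the squarefree (in the $\NN^p$-graded sense) monomial ideal $J_\sP = \bigcap_{\bn \in B(\sP) \cap \NN^p} \pp_{\bm - \bn}$. For (i), iterating the Mayer--Vietoris short exact sequence $0 \to S/(I \cap J) \to S/I \oplus S/J \to S/(I+J) \to 0$ over the primes $\pp_{\bm - \bn}$ yields
$$
[\OO_{Y_\sP}] \;=\; \sum_{\emptyset \neq A \subseteq B(\sP) \cap \NN^p} (-1)^{|A|+1}\, [\OO_{V(P_A)}] \;\in\; K(\PP),
$$
where $P_A := \sum_{\bn \in A} \pp_{\bm - \bn}$. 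Since each $\pp_{\bm - \bn}$ is generated by a subset of the variables, $P_A = \pp_{\bm - \wedge A}$ with $\wedge A$ the componentwise minimum of $A$; thus $V(P_A) \cong \PP_\kk^{n_1^\ast(A)} \times \cdots \times \PP_\kk^{n_p^\ast(A)}$ for $\bn^\ast(A) := \wedge A$, and the Koszul resolution of the regular sequence of variables cutting out $V(P_A)$ gives $[\OO_{V(P_A)}] = \prod_i (1 - t_i)^{m_i - n_i^\ast(A)}$ in $K(\PP)$.

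Since the monomials $\prod_i (1-t_i)^{a_i}$ with $0 \le a_i \le m_i$ form a $\ZZ$-basis of $K(\PP)$, comparing with the expansion in \autoref{rem_chow_groth} yields
$$
c_\bn(Y_\sP) \;=\; \sum_{\substack{\emptyset \neq A \subseteq B(\sP) \cap \NN^p \\ \wedge A \,=\, \bn}} (-1)^{|A|+1},
$$
which vanishes outside $I(\sP) \cap \NN^p$ (since $\wedge A$ is dominated by every $\bn' \in A \subseteq B(\sP)$ and $I(\sP) \cap \NN^p$ is down-closed in $\NN^p$). Summing over $\bw \ge \bn$ collapses the double sum to $\sum_{\emptyset \neq A \subseteq S_\bn}(-1)^{|A|+1}$, where $S_\bn := \{\bn' \in B(\sP) \cap \NN^p : \bn' \ge \bn\}$; this equals $1$ when $S_\bn \neq \emptyset$ and $0$ otherwise. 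The augmentation property of discrete polymatroids ensures $S_\bn \neq \emptyset \iff \bn \in I(\sP) \cap \NN^p$, so $c_\bn(Y_\sP)$ satisfies both the support condition and the recursion defining $\mu_\sP(\bn)$. Hence $c_\bn(Y_\sP) = \mu_\sP(\bn)$.

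For (ii), we run the parallel inclusion--exclusion on the generators of $I_{\sP^\vee}$: writing $\{\bn : \xx^\bn \in I_{\sP^\vee}\} = \bigcup_{\bv \in B(\sP^\vee) \cap \NN^p} (\bv + \NN^p)$ and using $\bigcap_{\bv \in A}(\bv + \NN^p) = \vee A + \NN^p$ yields
$$
H_{I_{\sP^\vee}}(\ttt) \;=\; \frac{1}{\prod_i(1-t_i)}\sum_{\emptyset \neq A \subseteq B(\sP^\vee) \cap \NN^p}(-1)^{|A|+1}\, \ttt^{\vee A}.
$$
Multiplying by $\prod_i(1-t_i)$ gives $\mathcal{K}(I_{\sP^\vee};\ttt) = \sum_{\emptyset \neq A}(-1)^{|A|+1}\ttt^{\vee A}$ (equivalently, this is the Taylor complex computation of the $K$-polynomial). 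The bijection $A \leftrightarrow \bm - A$ between nonempty subsets of $B(\sP) \cap \NN^p$ and $B(\sP^\vee) \cap \NN^p$ intertwines $\wedge$ and $\vee$ via $\wedge A = \bm - \vee(\bm - A)$, and substituting into the formula from (i) rewrites the right-hand side as $\sum_\bn c_\bn(Y_\sP)\, \ttt^{\bm - \bn}$, which is (ii).

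The main technical care is in the iterated Mayer--Vietoris step: one must verify that sums of the coordinate-subspace primes $\pp_{\bm - \bn}$ are again of the same form (namely $\pp_{\bm - \wedge A}$) and that the Koszul resolutions of all these primes give the asserted classes in $K(\PP)$. Both points are automatic because each $\pp_{\bm - \bn}$ is generated by a subset of the variables, so the real content lies in tracking the signs and indices correctly through the combinatorics and in invoking the augmentation property for discrete polymatroids.
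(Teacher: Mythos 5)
Your proof is correct but follows a genuinely different route. For part~(i), the paper simply cites \cite{knutson2009frobenius} as a black box; you instead derive the identity from first principles by iterating the Mayer--Vietoris sequence over the primary decomposition $J_\sP = \bigcap_{\bn}\pp_{\bm-\bn}$ (valid because sums and intersections of coordinate-subspace ideals distribute over one another), reading off $c_\bn(Y_\sP)$ as the alternating sum over subsets $A$ with $\wedge A = \bn$, and then recognizing the resulting binomial collapse together with the augmentation property of discrete polymatroids as the M\"obius recursion. For part~(ii), the paper routes through the Miller--Sturmfels Alexander duality relation $\mathcal{K}(K_\sP;\ttt)=\mathcal{K}(S/J_\sP;\mathbf{1}-\ttt)$ and invariance of $K$-polynomials under polarization; you avoid both by computing $\mathcal{K}(I_{\sP^\vee};\ttt)$ from the Taylor-complex inclusion--exclusion on the staircase of $I_{\sP^\vee}$ and matching it to the formula from~(i) via the anti-isomorphism $\bn\mapsto\bm-\bn$, which interchanges $\wedge$ and $\vee$. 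Your version is more self-contained and makes the combinatorial duality between the minimal primes of $J_\sP$ and the generators of $I_{\sP^\vee}$ fully explicit; the paper's is more modular and shorter. One place to be explicit: your M\"obius inversion step uses that the recursion defining $\mu_\sP$ is equivalent to $\sum_{\bw\ge\bn}\mu_\sP(\bw)=\mathbb{1}\bigl[\bn\in I(\sP)\cap\NN^p\bigr]$, which requires reading $\bn+\ZZ_{>0}^p$ in the paper's definition as $\{\bw:\bw\ge\bn,\,\bw\ne\bn\}$ rather than as the strictly positive orthant. This is in fact the intended meaning (one checks it on $U_{1,2}$ with cage $(1,1)$, where $\mu_\sP(\mathbf{0})=c_{\mathbf{0}}(Y_\sP)=-1$ and also $\cave_\sP = t_1+t_2-1$), but since your argument hinges on it, it is worth stating.
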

	\begin{proof}
		(i) This part follows from \cite{knutson2009frobenius} (see also \cite{K_POLY_MULT_FREE}).
		
		(ii) Consider the $K$-polynomial $\mathcal{K}(S/J_\sP;\ttt)$ of $S/J_\sP$.
		Since each minimal prime of $J_\sP$ is of the form $\pp_{\bm-\bn}$ (a Borel-fixed prime in a multigraded setting), one can show that the $K$-polynomial $\mathcal{K}(S/J_\sP;\ttt) \in \ZZ[t_1,\ldots,t_p]$ and the class $\left[\OO_{Y_\sP}\right] \in K(\PP)$ determine one another; that is, we have the equality 
		$$
		\mathcal{K}(S/J_\sP;\ttt)  \;=\; \sum_{\bn \in \NN^p }  c_\bn(Y_\sP) \, (1-t_1)^{m_1-n_1}\cdots(1-t_p)^{m_p-n_p} \;\in\; \ZZ[t_1,\ldots,t_p]
		$$
		(see \cite[\S 4]{K_POLY_MULT_FREE}).
		The Alexander dual of $J_\sP \subset S$ is the monomial ideal $K_\sP \subset S$ given by
		$$
		K_\sP \;:=\; \left(
        \begin{array}{c|c}
        \xx_{\bm-\bn} = \prod_{1 \le i \le p, \,0 \le j < m_i-n_i} x_{i,j} \;\;&\;\;  \bn \in B(\sP) \cap \NN^p 
        \end{array}
        \right)
		$$
		(see \cite[Corollary 1.5.5]{HERZOG_HIBI}).
		By \cite[Theorem 5.14]{miller2005combinatorial}, we have the equality 
		$$
		\mathcal{K}(K_\sP; \ttt) \;=\; 	\mathcal{K}(S/J_\sP;\mathbf{1-t})  \;=\; \sum_{\bn \in \NN^p}  c_\bn(Y_\sP) \, t_1^{m_1-n_1} \cdots t_p^{m_p-n_p} \;\in\; \ZZ[t_1,\ldots,t_p].
		$$	
		Notice that $K_\sP$ can be seen naturally as the polarization of $I_{\sP^\vee}$ by mapping the monomial $\bx^{\bm-\bn}=x_1^{m_1-n_1}\cdots x_p^{m_p-n_p}$ in $R$ to the monomial $\xx_{\bm-\bn} = \prod_{1 \le i \le p, \,0 \le j < m_i-n_i} x_{i,j}$ in $S$.
		Finally, by standard properties of polarization (see \cite[\S 1.6]{HERZOG_HIBI}), it follows that $\mathcal{K}(I_{\sP^\vee}; \ttt)=\mathcal{K}(K_\sP; \ttt)$.
		This concludes the proof of the proposition.
	\end{proof}

	We now recall the notion of \emph{valuative functions} on polymatroids. 
	
	\begin{definition}
		The \emph{indicator function} $\mathbb{1}_\sP \colon \RR^p \rightarrow \ZZ$ of a polymatroid $\sP$ is the function given by
		$$
		\mathbb{1}_\sP(\bv) \;:=\; \begin{cases}
			1 & \text{ if } \bv \in B(\sP) \\
			0 & \text{ otherwise}.
		\end{cases}
		$$
		The \emph{valuative group} of polymatroids on $[p]$ with cage $\bm=(m_1,\ldots, m_p)$, denoted ${\rm Val}_\bm$, is the subgroup of $\Hom_{\rm Sets}(\RR^p, \ZZ)$ generated by all the indicator functions $\mathbb{1}_\sP$ for $\sP$ a polymatroid on $[p]$ with cage $\bm$.
		A function $f \colon \mathbb{P}\mathbb{o}\mathbb{l}_\bm \rightarrow G$ from the set $\mathbb{P}\mathbb{o}\mathbb{l}_\bm$ of polymatroids  with cage $\bm$ to an Abelian group $G$ is said to be \emph{valuative} if it factors through ${\rm Val}_\bm$.
		This means that, for all $\sP_1, \ldots, \sP_k \in \mathbb{P}\mathbb{o}\mathbb{l}_\bm$ and all $a_1,\ldots,a_k \in \ZZ$, if $\sum_{i=1}^{k} a_i \mathbb{1}_{\sP_i} = 0 \in \Hom_{\rm Sets}(\RR^p, \ZZ)$, then $\sum_{i = 1}^{k} a_i f(\sP_i) = 0 \in G$. 
	\end{definition}
	
	\begin{remark}
		\label{rem_equal_valuative}
		From \cite{DERKSEN_FINK} or \cite[Remark 3.16]{EUR_LARSON_INT_POLY}, the valuative group ${\rm Val}_\bm$ is generated by the indicator functions of realizable polymatroids over $\CC$.
		Therefore if two valuative functions $f,g \colon \mathbb{P}\mathbb{o}\mathbb{l}_\bm \rightarrow G$ agree on realizable polymatroids, then they are equal.
	\end{remark}
	
Our approach is based on defining the following polynomial and showing that it is \emph{valuative}.
We call this polynomial the \emph{cave polynomial} because it is motivated by the combinatorial notion of \emph{caves} introduced in \cite{K_POLY_MULT_FREE}.

	\begin{definition}
		The \emph{cave polynomial} of the polymatroid $\sP$ is given by 
		$$
		\cave_\sP(t_1,\ldots,t_p) \;:=\; \sum_{\bn \in \NN^p \text{ and } |\bn| = \rk(\sP)}  \mathbb{1}_\sP(\bn)\, \prod_{i=1}^{p-1} \left(1 - \max_{i<j}{\big\lbrace \mathbb{1}_\sP\left(\bn-\ee_i+\ee_j\right)\big\rbrace}t_i^{-1} \right) \ttt^\bn.
		$$
		Notice that  $\cave_\sP(t_1,\ldots,t_p)$ is an honest polynomial in $\ZZ[t_1,\ldots,t_p]$ and not a Laurent polynomial with possibly negative exponents of the variables $t_i$.
	\end{definition}
	
	\begin{remark}
		\label{rem_formula_cave}
		Write $\cave_\sP(\ttt) = \sum_{|\bn| \le \rk(\sP)} a_\bn(\sP)\, \ttt^\bn$.
		By ordering the points in $B(\sP) \cap \NN^p$ with respect to the lexicographic order (with $1 <  2 < \cdots < p$), we obtain a shelling of the facets of the simplicial complex $\Delta(J_\sP)$ associated to $J_\sP$ (see \cite[proof of Lemma 6.8]{K_POLY_MULT_FREE}).
		Then by \cite[Proposition 4.6]{K_POLY_MULT_FREE}, we obtain that the coefficients of the cave polynomial $\cave_\sP(\ttt)$ describe the class $[\OO_{Y_\sP}] \in K(\PP)$; that is,
		$$
		\left[\OO_{Y_\sP}\right] \;=\; \sum_{\bn \in \NN^p \text{ and } |\bn| \le \rk(\sP)}  a_\bn(\sP) \, \left[\OO_{\PP_\kk^{n_1}\times_{\kk} \cdots \times_{\kk} \PP_\kk^{n_p}}\right].
		$$
		Hence we have the equalities 
        $$
        a_\bn(\sP) \;=\; c_\bn(Y_\sP) \;=\; \mu_\sP(\bn)
        $$ 
        (see \autoref{rem_chow_groth} and \autoref{prop_equiv}). 
		As a consequence, we can write
		$$
		\cave_\sP(t_1,\ldots,t_p) \;=\; \sum_{\bn \in  \NN^p} \, \mu_\sP(\bn) \, t_1^{n_1}\cdots t_p^{n_p}.
		$$
		By symmetry, since we can choose any lexicographic order on $[p]$, we get 
		$$
		\cave_\sP(t_1,\ldots,t_p) \;:=\; \sum_{\bn \in \NN^p \text{ and } |\bn| = \rk(\sP)}  \mathbb{1}_\sP(\bn)\, \prod_{i=1}^{p-1} \left(1 - \max_{i<j}{\big\lbrace \mathbb{1}_\sP\left(\bn-\ee_{\pi(i)}+\ee_{\pi(j)}\right)\big\rbrace}t_{\pi(i)}^{-1} \right) \ttt^\bn
		$$
		for any permutation $\pi \in \mathfrak{S}_p$ on $[p]$.
		Let $\mathfrak{b} \colon \QQ[t_1,\ldots,t_p] \rightarrow \QQ[t_1,\ldots,t_p]$ be the $\QQ$-linear map sending $t_1^{n_1}\cdots t_p^{n_p}$ to $\binom{t_1+n_1}{n_1}\cdots \binom{t_p+n_p}{n_p}$.
		The cave polynomial $\cave_{\sP}(t_1,\ldots,t_p)$ satisfies the equation
		\begin{equation}
        \label{eq_YP_cave}
		\chi\big(Y_\sP, \, \OO_{Y_\sP}(v_1,\ldots,v_p)\big) \;=\; \left(\mathfrak{b}\left(\cave_\sP\right)\right)(v_1,\ldots,v_p)
		\end{equation}
		for all $(v_1,\ldots,v_p) \in \ZZ^p$.
	\end{remark}

We are also interested in the $K$-ring of a matroid and in the notion of multisymmetric lift.

\begin{definition}[{\cite{larson2024k}; see also \cite[\S 2.2]{eur2023k}}]
    \label{def_K_ring_mat}
    Let $\sM$ be a matroid on the ground set $E$.
    Let $K(\sM)$ be the \emph{augmented $K$-ring} of $\sM$, as introduced in \cite{larson2024k}.
    We are interested in the following features of $K(\sM)$:
    \begin{enumerate}[\rm (i)]
        \item It is endowed with an \emph{Euler characteristic map} $\chi(\sM, -) \colon K(\sM) \rightarrow \ZZ$.
        \item Each nonempty subset $\mathcal{S} \subseteq E$ defines an element $[\mathcal{L}_\mathcal{S}] \in K(\sM)$.
        \item The elements $\{[\mathcal{L}_\mathcal{S}]\}_{\emptyset \subsetneq \mathcal{S} \subseteq E}$ generate $K(\sM)$ as a ring.
        \item A \emph{line bundle} in $K(\sM)$ is a Laurent monomial in the $[\mathcal{L}_\mathcal{S}]$.
    \end{enumerate}
\end{definition}

\begin{definition}[\cite{EUR_LARSON_INT_POLY,eur2023k,crowley2022bergman}]
    \label{def_mult_lift}
   The \emph{multisymmetric lift} of $\sP$ is a matroid $\sM$ on a ground set $E$  which is equipped with a distinguished partition $E= \mathcal{S}_1 \sqcup \cdots \sqcup \mathcal{S}_p$ satisfying the following properties:
   \begin{enumerate}[\rm (i)]
    \item $|\mathcal{S}_i| = m_i$ for each $1 \le i \le p$.
    \item $\rk_\sM \colon 2^E \rightarrow \NN$ is preserved by the action of the product of symmetric groups $\mathfrak{S}_{\mathcal{S}_1} \times \cdots \times \mathfrak{S}_{\mathcal{S}_p}$.
    \item For each $J \subseteq [p]$, we have
    $$
    \rk_\sP(J) \;=\; \rk_{\sM}\Bigg(\bigsqcup_{j \in J} \mathcal{S}_j\Bigg).
    $$
   \end{enumerate}
   The multisymmetric lift $\sM$ always exists (see \cite[Theorem 2.11]{crowley2022bergman}).
   We say that $\sM$ is a matroid on a ground set $E$ with subsets $\mathcal{S}_1,\ldots,\mathcal{S}_p \subseteq E$ such that the \emph{restriction polymatroid} is $\sP$.
\end{definition}

Let $\sM$ be a matroid on a ground set $E$ with subsets $\mathcal{S}_1,\ldots,\mathcal{S}_p \subseteq E$ such that the restriction polymatroid is $\sP$.
By \cite[Theorem 1.2]{eur2023k}, the \emph{Snapper polynomial} of $\mathcal{L}_{\mathcal{S}_1}, \ldots, \mathcal{L}_{\mathcal{S}_p}$ satisfies the following equality
\begin{equation}
\label{eq_Snapper}
\chi\left(\sM, \mathcal{L}_{\mathcal{S}_1}^{\otimes v_1} \otimes \cdots \otimes \mathcal{L}_{\mathcal{S}_p}^{\otimes v_p}\right)  \;=\; \chi\left(Y_{\sP}, \, \OO_{Y_{\sP}}(\fv)\right)
\end{equation}
for all $\fv = (v_1,\ldots,v_p) \in \ZZ^p$.
Since the right-hand side of \autoref{eq_Snapper} depends only on $\sP$, we can make the following definition. 

\begin{definition}
    \label{def_Snapper}
    The \emph{Snapper polynomial} of the polymatroid $\sP$ is given by 
    $$
    \Snapp_\sP(t_1,\ldots,t_p) \;:=\; \chi\left(\sM, \mathcal{L}_{\mathcal{S}_1}^{\otimes t_1} \otimes \cdots \otimes \mathcal{L}_{\mathcal{S}_p}^{\otimes t_p}\right)  \;\in\; \NN[t_1,\ldots,t_p].
    $$
\end{definition}
We have the following explicit relation between the Snapper polynomial and the cave polynomial
\begin{equation}
\label{eq_Snapp_cave}
\Snapp_\sP(t_1,\ldots,t_p) \;=\; \mathfrak{b} \big( \cave_\sP(t_1,\ldots,t_p)\big).
\end{equation}
Indeed, the equality follows from \autoref{eq_YP_cave} and \autoref{eq_Snapper}.
	
	The next proposition is invaluable for our approach. 
	
	\begin{proposition}
		\label{prop_valuative_cave}
		The function $\cave \colon \mathbb{P}\mathbb{o}\mathbb{l}_\bm \rightarrow \ZZ[t_1,\ldots,t_p],\, \sP \mapsto \cave_\sP(t_1,\ldots,t_p)$ assigning the cave polynomial to a polymatroid is valuative. 
	\end{proposition}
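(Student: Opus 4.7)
The plan is to reduce the valuativity of $\cave_\sP$ to a valuativity statement about the Snapper polynomial via Equation (2.5), and then leverage the matroid-theoretic interpretation of $\Snapp_\sP$ through the multisymmetric lift, combined with Remark 2.16.

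First, I would observe that by Equation (2.5), $\Snapp_\sP = \mathfrak{b}(\cave_\sP)$, where the $\QQ$-linear operator $\mathfrak{b} \colon \QQ[t_1,\ldots,t_p] \rightarrow \QQ[t_1,\ldots,t_p]$, sending $\ttt^\bn$ to $\prod_i \binom{t_i+n_i}{n_i}$, is invertible since it sends the monomial $\QQ$-basis to another $\QQ$-basis. Consequently, the function $\sP \mapsto \cave_\sP$ is valuative if and only if $\sP \mapsto \Snapp_\sP$ is valuative. This lets us transfer the question to the Snapper polynomial side, where a $K$-theoretic interpretation is available.

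Next, I would establish valuativity of $\Snapp_\sP$. By \autoref{def_Snapper} combined with \autoref{eq_Snapper}, $\Snapp_\sP(\fv) = \chi(Y_\sP, \OO_{Y_\sP}(\fv)) = \chi\bigl(\sM, \mathcal{L}_{\mathcal{S}_1}^{\otimes v_1} \otimes \cdots \otimes \mathcal{L}_{\mathcal{S}_p}^{\otimes v_p}\bigr)$ for any multisymmetric lift $\sM$. For realizable $\sP$, Remark~2.4 (Brion's degeneration) identifies $[\OO_{Y_\sP}] = [\OO_{X_\sP}] \in K(\PP)$ with the $K$-class of the multiplicity-free variety $X_\sP$. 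The valuativity of $K$-classes (equivalently, of Hilbert polynomials) of multiplicity-free subvarieties of $\PP$ whose multidegree support is prescribed by a polymatroid is a known consequence of the standard machinery developed in \cite{K_POLY_MULT_FREE}. Hence $\cave$ is valuative on realizable polymatroids.

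Finally, I would use Remark 2.16 to conclude. Since $\Val_\bm$ is generated as an abelian group by the indicator functions $\mathbb{1}_\sP$ of realizable polymatroids, the valuative values on realizable polymatroids determine a unique group homomorphism $\phi \colon \Val_\bm \rightarrow \ZZ[t_1,\ldots,t_p]$. To finish, one must verify that $\phi(\mathbb{1}_\sP) = \cave_\sP$ for all polymatroids, not only realizable ones. For this, I would use the matroid-level formulation: by \autoref{def_mult_lift}, every polymatroid admits a multisymmetric lift, and the formula $\Snapp_\sP(\ttt) = \chi(\sM, \mathcal{L}_{\mathcal{S}_1}^{\otimes t_1} \otimes \cdots)$ gives a definition of $\Snapp_\sP$ that is intrinsic to $\sP$; combined with the known valuativity of Euler characteristics of line bundles in the augmented $K$-ring of matroids, one verifies that any valuative relation among polymatroid indicators yields the corresponding relation among Snapper polynomials, and hence by the invertibility of $\mathfrak{b}$, among cave polynomials.

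The main obstacle will be the last step, namely verifying that the matroid-level valuativity of the augmented $K$-ring Euler characteristic transfers correctly to the polymatroid level via the multisymmetric lift, so that the extension $\phi$ furnished by Remark~2.16 indeed agrees with $\cave_\sP$ for every (possibly non-realizable) polymatroid $\sP$.
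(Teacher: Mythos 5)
Your reduction of valuativity of $\cave$ to valuativity of $\Snapp$ via invertibility of the operator $\mathfrak{b}$ is correct, but the argument then stalls at precisely the point where the real work has to happen, and the tools you invoke cannot close the gap. The appeal to \autoref{rem_equal_valuative} is a logical misuse of that remark: it says that two functions \emph{already known to be valuative} and agreeing on realizable polymatroids must coincide. It does not allow you to \emph{construct} a valuative extension from data on realizable polymatroids, because extending a function from the realizable indicator functions to the full valuative group requires precisely the well-definedness (respecting relations) that you are trying to prove. You acknowledge this as ``the main obstacle,'' but that obstacle is the entire content of the proposition.

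The second gap is the unreferenced claim that ``valuativity of Euler characteristics of line bundles in the augmented $K$-ring of matroids'' is known and transfers to polymatroids via the multisymmetric lift. Even granting a matroid-level valuativity statement, the lift $\sP \mapsto \sM$ changes the ground set from $[p]$ to a set of size $m_1 + \cdots + m_p$, so a valuative relation $\sum a_i \mathbb{1}_{\sP_i} = 0$ among polymatroid indicators on $\RR^p$ does not automatically become a valuative relation among the lifts' indicators on $\RR^{|E|}$. This transfer would itself need an argument, and none is sketched.

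The paper proceeds along a much more elementary route. Using \autoref{prop_equiv}, \autoref{rem_formula_cave}, and \autoref{rem_dual_polymat}, the cave polynomial of $\sP^\vee$ is recovered from the $K$-polynomial of $I_\sP$, which in turn is determined by the $\NN^p$-graded Hilbert function of $I_\sP$; since $\sP \mapsto \sP^\vee$ is the affine reflection $\fv \mapsto \bm - \fv$ on indicator functions, valuativity of $\cave$ reduces to valuativity of $\sP \mapsto \dim_\kk [I_\sP]_\bn$ for each fixed $\bn$. This dimension is simply the indicator $i_{\bn + \RR_{\le 0}^p}(\sP)$ of whether $B(\sP)$ meets the shifted negative orthant $\bn + \RR_{\le 0}^p$, and such cone-intersection indicators are valuative by \cite[Corollary 4.3]{ARDILA_FINK_RINCON}. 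No geometric realizability, $K$-ring of matroids, or extension argument is needed.
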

	\begin{proof}
		Due to \autoref{rem_dual_polymat}, \autoref{rem_formula_cave}, and \autoref{prop_equiv}, it suffices to show the valuativity of the function assigning to each polymatroid $\sP$ the $\NN^p$-graded Hilbert function of the polymatroidal ideal $I_\sP \subset R$.
		For all $\bn \in \NN^p$, we have that $\dim_\kk\left(\left[I_\sP\right]_\bn\right)\neq 0$ if and only if $\bn$ belongs to the region 
		$$
		\bigcup_{\bw \in B(\sP) \cap \NN^p} \left(\bw + \ZZ_{>0}^p\right).
		$$
		Equivalently, we obtain 
		$$
		\dim_\kk\left(\left[I_\sP\right]_\bn\right) \;=\; i_{\bn + \RR_{\le 0}^p}\left(\sP\right) \;:=\; \begin{cases}
			1 & \text{ if } B(\sP) \cap \left(\bn + \RR_{\le 0}^p\right) \neq \varnothing\\
			0 & \text{ otherwise.}
		\end{cases}
		$$
		Finally, from \cite[Corollary 4.3]{ARDILA_FINK_RINCON}, we know that the function $i_{\bn + \RR_{\le 0}^p} \colon \mathbb{P}\mathbb{o}\mathbb{l}_\bm \rightarrow \ZZ$ is valuative. 
        (The statement of \cite[Corollary 4.3]{ARDILA_FINK_RINCON} is for matroids, but the same proof holds for polymatroids.)
	\end{proof}

	\begin{lemma}
		\label{lem_truncation_polymat}
		For any $\bb \in \NN^p$, the set $\sP' =  \sP - \bb = \lbrace \bn - \bb  \mid \bn \in \sP \text{ and }  \bn \ge \bb \rbrace$ and the truncation $\sP_\bb = \lbrace \bn \in \sP \mid \bn \ge \bb \rbrace$ are both {\rm(}base discrete{\rm)} polymatroids.
	\end{lemma}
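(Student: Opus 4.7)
The plan is to invoke Murota's characterization, already acknowledged earlier in the paper (see the remark on M-convex sets): a finite subset of $\NN^p$ with constant coordinate-sum is a base discrete polymatroid if and only if it satisfies the symmetric exchange axiom, that is, for all $\bn, \bn'$ in the set and every index $i$ with $n_i > n'_i$, there exists $j$ with $n_j < n'_j$ such that $\bn - \ee_i + \ee_j$ and $\bn' + \ee_i - \ee_j$ both lie in the set. Since $\sP' = \sP_\bb - \bb$ is the translation of the truncation $\sP_\bb$ by the integer vector $-\bb$, and translations trivially preserve both coordinate-sum constancy and the exchange axiom, I would first reduce the lemma to verifying that $\sP_\bb$ is an M-convex set.

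For $\sP_\bb$, the strategy is to transport the exchange axiom down from $\sP$. Given $\bn, \bn' \in \sP_\bb$ and an index $i$ with $n_i > n'_i$, since $\sP$ is itself M-convex one can pick $j$ with $n_j < n'_j$ such that $\bn - \ee_i + \ee_j \in \sP$ and $\bn' + \ee_i - \ee_j \in \sP$. I would then check that these two new points still dominate $\bb$ coordinate-wise. The only modified coordinates are $i$ and $j$: for $\bn - \ee_i + \ee_j$ the relevant inequality is $n_i - 1 \ge b_i$, which follows from $n_i > n'_i \ge b_i$; and for $\bn' + \ee_i - \ee_j$ the relevant inequality is $n'_j - 1 \ge b_j$, which follows from $n'_j > n_j \ge b_j$. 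This establishes that $\sP_\bb$ inherits the exchange axiom and is thus a base discrete polymatroid, and by the initial reduction the same conclusion transfers to $\sP'$ (with rank $\rk(\sP) - |\bb|$ and cage $\bm - \bb$).

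The hard part, if one can call it that, is recognizing that one must exploit the strict gaps $n_i > n'_i$ and $n'_j > n_j$ together with the constraints $n'_i \ge b_i$ and $n_j \ge b_j$ to absorb the $\pm 1$ shifts in coordinates $i$ and $j$. Once that observation is made, the argument is a purely mechanical verification of the exchange axiom, and no further ingredients are needed.
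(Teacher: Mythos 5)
Your proof is correct but takes a genuinely different route from the paper's. The paper goes through Lorentzian polynomial theory: by a theorem of Br\"and\'en--Huh, the normalized generating function $F_\sP(\ttt) = \sum_{\bn \in \sP} \ttt^\bn / \bn!$ is Lorentzian; a result on dually Lorentzian polynomials shows that $F_{\sP'}$ and $F_{\sP_\bb}$ remain Lorentzian; and Br\"and\'en--Huh applied in reverse recovers M-convexity. You instead verify Murota's symmetric exchange axiom directly: you translate by $-\bb$ to reduce to the truncation $\sP_\bb$, and then observe that for $\bn, \bn' \ge \bb$, any exchange pair produced by the symmetric exchange for $\sP$ yields points that are still $\ge \bb$, because the strict gaps $n_i > n'_i \ge b_i$ and $n'_j > n_j \ge b_j$ absorb the unit shifts. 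Your argument is more elementary and self-contained --- it uses nothing beyond Murota's definition --- whereas the paper's proof leans on heavier machinery that fits the thematic toolkit it deploys elsewhere (Lorentzian polynomials reappear in the final example). One point worth making explicit in a write-up: you need the two-sided (symmetric) exchange axiom for $\sP$, not merely the one-sided exchange, in order to place both $\bn - \ee_i + \ee_j$ and $\bn' + \ee_i - \ee_j$ in $\sP$; since Murota defines M-convexity by this symmetric axiom (and the paper's own remark identifies base discrete polymatroids with Murota's M-convex sets), the appeal is justified.
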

	\begin{proof}
		Write $F_\sP(\ttt) = \sum_{\bn \in \sP} \frac{\ttt^\bn}{\bn!}$ for the generating function of $\sP \subset \NN^p$.  
		By \cite[Theorem 3.10]{HUH_BRANDEN}, the polynomial $F_\sP$ is Lorentzian.  
		Now, by \cite[Proposition 3.3]{DUALLY_LORENTZIAN}, the generating functions $F_{\sP'}$ and $F_{\sP_{\bb}}$ are also Lorentzian.  
		Another application of \cite[Theorem 3.10]{HUH_BRANDEN} yields that $\sP'$ and $\sP_{\bb}$ are M-convex sets.  
		Hence, they are both (base discrete) polymatroids.
	\end{proof}

	\begin{lemma}
		\label{lem_cutting_polymat}
		Let $i \in [p]$ and consider $\sP' =  \sP - \ee_i$ and $\sP_{\ee_i}$.
		Then, for all $\bn \ge \ee_i$, we have the equalities	
		$$
		c_\bn(Y_{\sP_{\ee_i}}) \;=\; c_{\bn-\ee_i}\left(Y_{\sP'}\right) \;=\; c_{\bn}\left(Y_\sP\right).
		$$
	\end{lemma}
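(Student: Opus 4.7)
The plan is to reduce the claim to equalities between coefficients of cave polynomials, and then exploit the permutation freedom in the cave polynomial formula from \autoref{rem_formula_cave}. Combining \autoref{prop_equiv}(i) with \autoref{rem_formula_cave}, each $c_\bn(Y_-)$ equals the coefficient $a_\bn(-)$ of $\ttt^\bn$ in the corresponding cave polynomial. So the lemma reduces to showing
$$
a_\bn(\sP) \;=\; a_\bn(\sP_{\ee_i}) \;=\; a_{\bn - \ee_i}(\sP') \qquad \text{for all } \bn \ge \ee_i.
$$

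The crucial step is to compute all three cave polynomials using a permutation $\pi \in \mathfrak{S}_p$ with $\pi(p) = i$. With this choice, no $t_i^{-1}$ factor appears in the product
$$
\prod_{k=1}^{p-1}\bigl(1 - \max_{l>k}\{\mathbb{1}_\sP(\bw - \ee_{\pi(k)} + \ee_{\pi(l)})\}\, t_{\pi(k)}^{-1}\bigr),
$$
because $\pi(k) \ne i$ for each $k < p$. After expansion, every monomial $\ttt^\bn$ appearing in $\cave_\sP(\ttt)$ arises from a base $\bw \in \sP$ with $\bw - \bn \in \{0,1\}^p$ and $(\bw - \bn)_i = 0$; in particular $w_i = n_i$, and all indicators $\mathbb{1}_\sP$ contributing to $a_\bn(\sP)$ are evaluated at points whose $i$-th coordinate equals $n_i$.

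With this in hand, the first equality $a_\bn(\sP) = a_\bn(\sP_{\ee_i})$ follows because $\bn \ge \ee_i$ forces every such evaluation point to have $i$-th coordinate $\ge 1$, and on those points $\mathbb{1}_\sP$ agrees with $\mathbb{1}_{\sP_{\ee_i}}$. The second equality $a_\bn(\sP_{\ee_i}) = a_{\bn - \ee_i}(\sP')$ then follows by applying the cave polynomial formula for $\sP'$ with the same $\pi$ and performing the substitution $\bu = \bw - \ee_i$; this uses the bijection $\bu \leftrightarrow \bu + \ee_i$ between $\sP'$ and $\sP_{\ee_i}$, and the fact that $\rk(\sP') = \rk(\sP_{\ee_i}) - 1$ ensures the rank condition is preserved under the substitution.

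The main obstacle is the bookkeeping in this last step: one must verify term by term that $\mathbb{1}_{\sP'}(\bu - \ee_{\pi(k)} + \ee_{\pi(l)})$ equals $\mathbb{1}_{\sP_{\ee_i}}(\bw - \ee_{\pi(k)} + \ee_{\pi(l)})$ for the relevant $k,l$. The choice $\pi(p) = i$ is exactly what makes this routine, since $\pi(k), \pi(l) \ne i$ and so the $\ee_i$-shift does not interact with the $\ee_{\pi(k)}, \ee_{\pi(l)}$ appearing inside the max.
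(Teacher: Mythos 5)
Your proof is correct, and it takes a genuinely different route from the paper's. The paper splits the lemma into the easy chain $c_\bn(Y_{\sP_{\ee_i}}) = a_\bn(\sP_{\ee_i}) = a_{\bn-\ee_i}(Y_{\sP'})$ (from \autoref{rem_formula_cave}) and a hard equality $c_\bn(Y_{\sP_{\ee_i}}) = c_\bn(Y_\sP)$, which it proves by invoking valuativity (\autoref{prop_valuative_cave} together with \autoref{rem_equal_valuative}) to reduce to realizable polymatroids over $\CC$, and then applying algebro-geometric tools: the multiplicity-free variety $X_\sP \subset \PP_\CC$ of {\rm \cite[Proposition 7.15]{K_POLY_MULT_FREE}}, the flat degeneration of \autoref{rem_flat_degen}, and Bertini's theorem for the hyperplane section $X_\sP \cap H$. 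By contrast, you prove the hard equality $a_\bn(\sP) = a_\bn(\sP_{\ee_i})$ directly from the cave polynomial formula by exploiting the permutation-invariance recorded in \autoref{rem_formula_cave}: choosing $\pi$ with $\pi(p) = i$ ensures that $t_i^{-1}$ never appears, so only bases $\bw$ with $w_i = n_i \geq 1$ can contribute to $a_\bn$, and all indicator evaluations land on lattice points with $i$-th coordinate $\geq 1$, where $\mathbb{1}_\sP$ and $\mathbb{1}_{\sP_{\ee_i}}$ coincide. Your argument is more elementary and combinatorially self-contained, bypassing the valuativity machinery and Bertini entirely; what it costs you is that the burden is shifted onto the permutation-invariance of the cave polynomial, which the paper justifies via a shelling argument from {\rm \cite{K_POLY_MULT_FREE}}.

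One small imprecision worth flagging: you assert that ``all indicators contributing to $a_\bn(\sP)$ are evaluated at points whose $i$-th coordinate equals $n_i$'' and, in the last paragraph, that ``$\pi(k), \pi(l) \neq i$.'' This is not quite right, since $\pi(l) = \pi(p) = i$ occurs whenever $l = p$, and then $\bw - \ee_{\pi(k)} + \ee_{\pi(l)}$ has $i$-th coordinate $n_i + 1$. Neither of your two equalities is harmed: for the first, the relevant property is only that the $i$-th coordinate is $\geq 1$; for the second, the identity $\mathbb{1}_{\sP'}(\bv) = \mathbb{1}_{\sP_{\ee_i}}(\bv + \ee_i)$ holds for any integer point $\bv$, independently of whether $\pi(l) = i$, so the term-by-term matching goes through anyway. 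But you should correct the statement, since as written it would mislead a reader of the proof.
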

	\begin{proof}
		The equalities $c_\bn(Y_{\sP_{\ee_i}}) = a_\bn(\sP_{\ee_i}) = a_{\bn-\ee_i}(\sP') = c_{\bn-\ee_i}\left(Y_{\sP'}\right)$ follow from \autoref{rem_formula_cave}. 
		We prove the other equality.
		Consider the functions $f, g \colon \mathbb{P}\mathbb{o}\mathbb{l}_\bm \rightarrow \ZZ$ given by $f(\sP) := c_\bn(Y_\sP)$ and $g(\sP) := c_{\bn-\ee_i}(Y_{\sP-\ee_i})$.
		By \autoref{prop_valuative_cave}, both functions are valuative.
		Thus, due to \autoref{rem_equal_valuative}, it suffices to show that $f$ and $g$ agree on realizable polymatroids. 
		
		Let $\sP$ be a realizable polymatroid over $\CC$.
		Due to \cite[Proposition 7.15]{K_POLY_MULT_FREE} and \autoref{rem_flat_degen}, we can find a multiplicity-free $X_\sP \subset \PP_\CC = \PP_\CC^{m_1} \times \cdots \times \PP_\CC^{m_p}$ such that $f(\sP) = c_\bn(Y_\sP) = c_\bn(X_\sP)$.
		Let $H \subset \PP_\CC$ be the pullback of a general hyperplane in $\PP_\CC^{m_i}$. 
		Then, by Bertini's theorem, we have that $X_\sP \cap H$ is also a multiplicity-free variety and that $c_{\bn-\ee_i}(X_\sP \cap H) = c_\bn(X_\sP)$.
		Again, applying \cite[Proposition 7.15]{K_POLY_MULT_FREE} to the polymatroid $\sP-\ee_i$, we obtain $g(\sP) = c_{\bn-\ee_i}(Y_{\sP-\ee_i})=c_{\bn-\ee_i}(X_\sP \cap H)$.
		So the proof is complete.
	\end{proof}

We are now ready to prove our main results.

\begin{proof}[Proof of \autoref{thmA}]
(i)
Set $\mathscr{C} := \musupp(\sP) = \lbrace \bn \in \NN^p \mid c_\bn(\sP) \neq 0 \rbrace = \supp\left(\cave_\sP(t_1,\ldots,t_p)\right)$ (see \autoref{rem_formula_cave} and \autoref{prop_equiv}). 
We show that $\C$ is a cave (see \cite[\S 5]{K_POLY_MULT_FREE}) and so it is a generalized polymatroid by \cite[Theorem 5.18]{K_POLY_MULT_FREE}. 
Let $\bb\in \NN^p$ and consider $\mathscr{A}:=\C_\bb$, the $\bb$-truncation of $\mathscr{C}$.
By \autoref{lem_truncation_polymat}, we have that $\sP_\bb$ is also a polymatroid.
Iteratively applying \autoref{lem_cutting_polymat}, we get $c_\bn(\sP) = c_\bn(\sP_\bb) = c_{\bn-\bb}(\sP - \bb)$ for all $\bn \ge \bb$.
Thus $\mathscr{A} = \supp\left(\cave_{\sP - \bb}(t_1,\ldots,t_p)\right) + \bb$.
We now check the conditions of \cite[Definition 5.8]{K_POLY_MULT_FREE}:
\begin{itemize}[--]
	\item Part (a) holds because we already know that $\mathscr{A}^{\rm top} = \sP_\bb$ is a polymatroid.
	\item Part (b) holds by construction since the cave polynomial mimics the notion of stalactites.
	\item Part (c) holds by induction on the rank of $\sP$ because the rank of $\sP - \bb$ is strictly smaller than the rank of $\sP$ when $\bb \neq \mathbf{0}$.
	The base case is clear since $\cave_\sP(t_1,\ldots,t_p) = 1$ when $\sP = \{\mathbf{0}\}$ is the polymatroid of rank zero.
\end{itemize}
Therefore, the support of the cave polynomial $\cave_{\sP}(t_1,\ldots,t_p)$ is a cave, and so we are done with the proof of this part.

\smallskip	

(ii) This part follows from \autoref{rem_formula_cave} and part (i).

\smallskip

(iii) 
The equality 
$$
\mathcal{K}\left(I_\sP;t_1,\ldots,t_p\right) \;=\; t_1^{m_1}\cdots t_p^{m_p} \; \cave_{\sP^\vee}\left(t_1^{-1},\ldots,t_p^{-1}\right)
$$
follows from \autoref{prop_equiv} and \autoref{rem_formula_cave}.
By part (i), we already know that the support of $\cave_{\sP^\vee}(\ttt)$ is a generalized polymatroid. 
This implies that the support of $\mathcal{K}(I_\sP;\ttt)=\ttt^\bm\,\cave_{\sP^\vee}(\mathbf{t^{-1}})$ is also a generalized polymatroid.
Recall that polymatroidal ideals have a linear resolution (see \cite[Theorem 12.6.2]{HERZOG_HIBI}).
Hence ${\rm HS}_i(I_\sP)$ is generated by the monomials $\xx^\bn = x_1^{n_1}\cdots x_p^{n_p}$ of total degree $\rk(\sP) + i$ such that $\ttt^\bn = t_1^{n_1}\cdots t_p^{n_p}$ belongs to the support of $\mathcal{K}(I_\sP; \ttt)$.
This implies the equality 
$$
{\rm HS}_i\left(I_\sP\right) \;=\; \Big( \xx^\bn \;\mid\; \bn  \in \NN^p, \; |\bn| = \rk(\sP)+i \text{\, and \,} \mu_{\sP^\vee}(\bm - \bn)\neq 0  \Big)
$$
and shows that \autoref{conj1} holds.

\smallskip

(iv) This part was proved in \autoref{prop_valuative_cave}.
\end{proof}

We finish the paper with the following example. 

\begin{example}
We illustrate \autoref{thmA} in an explicit example.  To this end, consider the polymatroid $\sP$ described in \cite[Section~7]{pagaria-pezzoli}.  It is a polymatroid on the set $[3] = \{1,2,3\}$ with cage $(2,2,4)$ and rank function $\rk_{\sP}\colon 2^{[3]} \rightarrow \NN$ given by
\begin{gather*}
\rk(\varnothing) = 0,\quad \rk(\{1\}) = \rk(\{2\}) = 2,\quad \rk(\{3\}) = \rk(\{1,2\}) = 4,\\
\rk(\{1,3\}) = \rk(\{2,3\}) = \rk(\{1,2,3\}) = 5.
\end{gather*}
The base polytope $B(\sP)$ and the independence polytope $I(\sP)$ are shown in \autoref{fig}. 
\begin{figure}[h]
\includegraphics[scale=0.4]{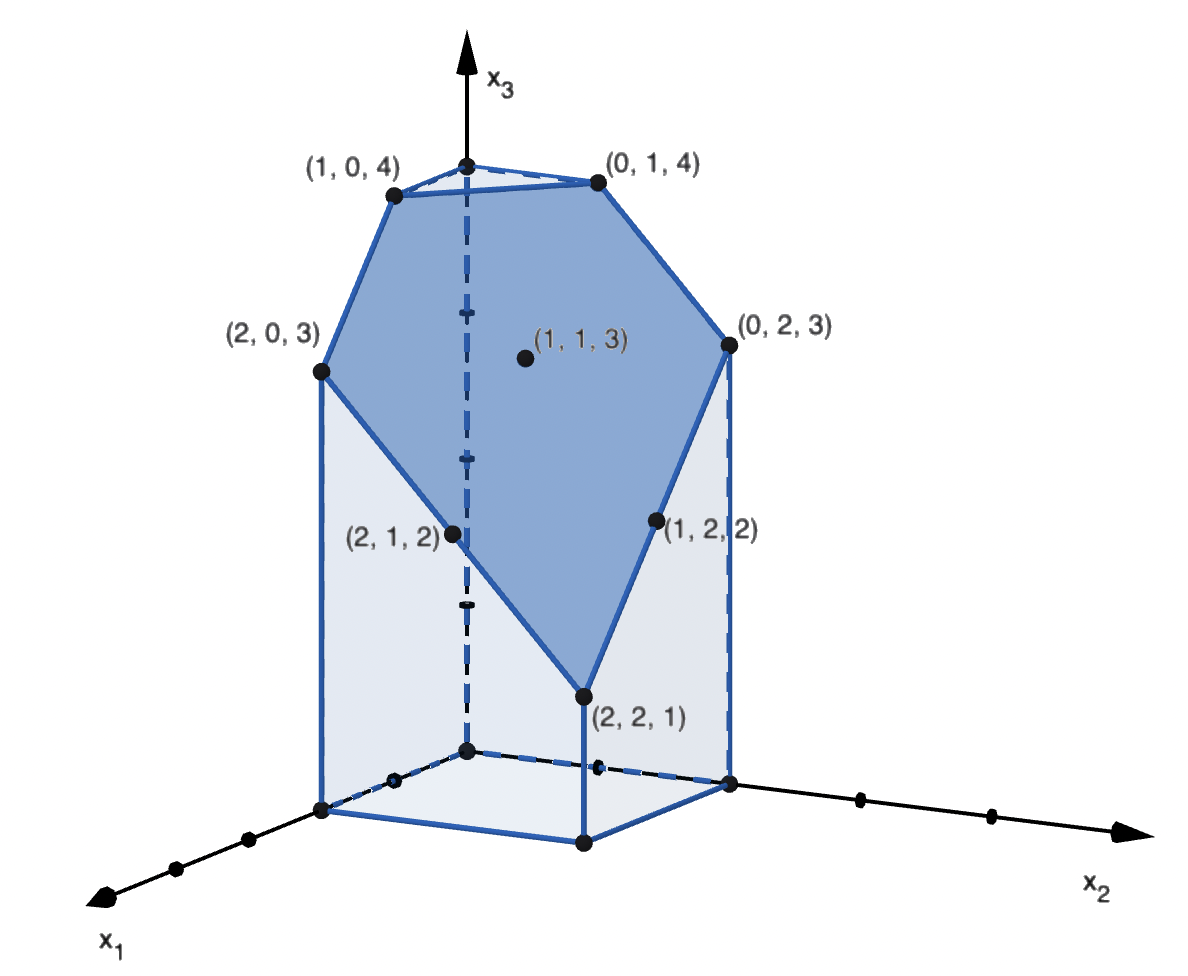}
\caption{Base and independence polytopes of $\mathscr{P}$.}
\label{fig}
\end{figure}
The lattice points in the base polytope are given by
\[
B(\sP) \cap \NN^3 \;=\; \big\{(0,2,3),\, (2,0,3),\, (1,2,2),\, (2,1,2),\, (2,2,1),\, (1,1,3),\, (1,0,4),(0,1,4)\big\},
\]
and thus the polymatroidal ideal $I_{\sP} \subset \kk[x_1,x_2,x_3]$ is given by
\[
I_{\sP} = \left(x_2^2 x_3^3,\, x_1^2 x_3^3,\, x_1 x_2^2 x_3^2,\, x_1^2 x_2 x_3^2,\, x_1^2 x_2^2 x_3,\, x_1 x_2 x_3^3,\, x_1 x_3^4,\, x_2 x_3^4\right).
\]
The $K$-polynomial of $I_\sP$ is given by 
\begin{align*}
\mathcal{K}\left(I_\sP; t_1,t_2,t_3\right) &\;=\;
t_{1}^{2}t_{2}^{2}t_{3}^{3}+t_{1}^{2}t_{2}t_{3}^{4}+t_{1}t_{2}^{2}t_{3}^{4}\\
&\quad-2\,t_{1}^{2}t_{2}^{2}t_{3}^{2}-2\,t_{1}^{2}t_{2}t_{3}^{3}-2\,t_{1}t_{2}^{2}t_{3}^{3}-t_{1}^{2}t_{3}^{4}-2\,t_{1}t_{2}t_{3}^{4}-t_{2}^{2}t_{3}^{4}\\
&\quad+t_{1}^{2}t_{2}^{2}t_{3}+t_{1}^{2}t_{2}t_{3}^{2}+t_{1}t_{2}^{2}t_{3}^{2}+t_{1}^{2}t_{3}^{3}+t_{1}t_{2}t_{3}^{3}+t_{2}^{2}t_{3}^{3}+t_{1}t_{3}^{4}+t_{2}t_{3}^{4}.
\end{align*}
The dual polymatroid $\sP^\vee$ of $\sP$, with respect to the cage $(2,2,4)$, is described by the lattice points 
$$
B(\sP^\vee) \cap \NN^3 \;=\; \big\{(2,0,1),\, (0,2,1),\, (1,0,2),\, (0,1,2),\, (0,0,3),\, (1,1,1),\, (1,2,0),\, (2,1,0)\big\}.
$$
The cave polynomial of $\sP^\vee$ is given by 
\begin{align*}
\cave_{\sP^\vee}(t_1,t_2,t_3) &\;=\; t_{1}^{2}t_{2}+t_{1}t_{2}^{2}+t_{1}^{2}t_{3}+t_{1}t_{2}t_{3}+t_{2}^{2}t_{3}+t_{1}t_{3}^{2}+t_{2}t_{3}^{2}+t_{3}^{3}\\
&\quad-t_{1}^{2}-2\,t_{1}t_{2}-t_{2}^{2}-2\,t_{1}t_{3}-2\,t_{2}t_{3}-2\,t_{3}^{2}\\
&\quad+t_{1}+t_{2}+t_{3}.
\end{align*}
Using the SageMath \cite{sagemath} function \verb|is_lorentzian()|, we verified that the homogenization of the (sign-changed) polynomials $\mathcal{K}\left(I_\sP; t_1,t_2,t_3\right)$ and $\cave_{\sP^\vee}(t_1,t_2,t_3)$ are both denormalized Lorentzian polynomials.  
\end{example}    
	
\section*{Acknowledgments}	

Jacob P. Matherne and Anna Shapiro received support from Simons Foundation Travel Support for Mathematicians Award MPS-TSM-00007970.

\bibliographystyle{amsalpha}
\bibliography{references.bib}
	
\end{document}